\newtheorem{thm}{Theorem}[section]
\numberwithin{equation}{section}
\newcommand{\abs}[1]{\left\vert#1\right\vert}
\newcommand{\R}{\mathbb{R}}
\newcommand{\C}{\mathbb{C}}
\newcommand{\T}{\mathbb{T}}
\newcommand{\h}{\mathbb{H}}
\newcommand{\Q}{\mathbb{Q}}
\newcommand{\Z}{\mathbb{Z}}
\newcommand{\eps}{\varepsilon}
\newcommand{\im}{\mathfrak{Im}}
\newcommand{\re}{\mathfrak{Re}}
\newcommand{\bbm}{\begin{bmatrix}}
\newcommand{\ebm}{\end{bmatrix}}
\newcommand{\bpm}{\begin{pmatrix}}
\newcommand{\epm}{\end{pmatrix}}
\newcommand{\bsm}{\left(\begin{smallmatrix}}
\newcommand{\esm}{\end{smallmatrix}\right)}
\newcommand{\bsbm}{\left[\begin{smallmatrix}}
\newcommand{\esbm}{\end{smallmatrix}\right]}
\newcommand{\sign}{\mathrm{sign}}
\newcommand{\1}{\mathrm{1}}
\newcommand{\SL}{\mathrm{SL}}
\begin{document}

\title{Dedekind sums, reciprocity, and non-arithmetic groups}
\author{Claire Burrin}
\address{Dept.~of Mathematics, Rutgers University, 110 Frelinghuysen Rd, Piscataway, NJ 08854}
\email{claire.burrin@rutgers.edu}
\date{\today}
\maketitle

Dedekind sums, arithmetic correlation sums that arose in Dedekind's study of the modular transformation of the logarithm of the $\eta$-function \cite{Ded}, are surprisingly ubiquitous. Their arithmetic properties attracted the attention of number theorists, combinatorists, and theoretical computer scientists alike \cite{Rad,Mey,Pom,Knu,BR}, and they appear more broadly in geometry, topology, and physics \cite{Ati,KM,BG}. Accordingly, there is a similarly vast literature on variations and generalizations of Dedekind sums. It is the goal of this note to survey some of the aspects of Dedekind sums for (non-uniform) lattices in $\SL_2(\R)$, otherwise referred to as Dedekind symbols. Intrinsically, this gives us a framework in which to investigate the r\^ole `arithmeticity' plays in defining properties of Dedekind sums. In this note, we discuss the reciprocity law for Dedekind symbols associated to lattices that are not necessarily arithmetic. We will see that the reciprocity law holds given an algebraic structure similar to that of $\SL_2(\Z)$, e.g.~Hecke triangle groups.

\section{Dedekind sums}

Let $x\mapsto((x))$ be the function that assigns to any real number $x$ the value
$$
((x)) = \begin{cases}  0 & x\in\Z\\ x-\lfloor x\rfloor -1/2 & x\in\R\setminus\Z, \end{cases}
$$
For any pair of coprime integers $(a,c)$ with $c>0$, the Dedekind sum is 
\begin{align}\label{Dedekind sum}
s(a,c) = \sum_{k=1}^{c-1} \left(\!\left(\frac{k}{c}\right)\!\right)\left(\!\left(\frac{ak}{c}\right)\!\right).
\end{align}
An intuitive way of thinking of Dedekind sums is perhaps the following. Consider the sequence $(y_n)=\left(\frac{x_n}{c}\right)$ of primitive rational points given iteratively by the linear congruential rule
\begin{align*}
x_0 &= 1\\
x_{n} &\equiv ax_{n-1} \text{ mod }c
\end{align*}
defined by a pair of coprime integers $(a,c)$. By Euler's theorem, $(y_n)$ is a periodic sequence, whose period is completely determined by the modulus $c$. More precisely, the period is given by Euler's totient function $\varphi(c)$, and hence is maximal if the modulus $c$ is a prime. Moreover, in this case, $(y_n)$ runs over all primitive rational points with denominator $c$. Then the Dedekind sum $s(a,c)$ is the covariance of successive points in $(y_n)$. 
In other words, a large Dedekind sum expresses high correlation between successive points, which 
suggests recurring patterns in the distribution of these points in the unit interval, while a small Dedekind sum indicates the possible asymptotic uniformly distribution of these points in the unit interval if we let $c\to\infty$. If such is the case, $(y_n)$ is called pseudo-random, meaning that although it is the result of a deterministic process, the output values look random. 
In short, Dedekind sums measure the pseudorandomness of $(y_n)$ depending on the input pair $(a,c)$ of coprime integers.

\section{...and modular transformations}

As a guiding introduction to the construction of Dedekind symbols, we will see where Dedekind sums appear in analytic number theory. From this viewpoint, where best to start than with the Riemann $\zeta$-function! Recall that 
$$
\zeta(s) = \sum_{n\geq1} \frac{1}{n^s}
$$ 
is defined for $\re(s)>1$, has a meromorphic continuation to the whole complex plane that is regular except for a simple pole at $s=1$ with residue 1, and the constant term in the Laurent expansion of $\zeta(s)$ at 1 is the Euler--Mascheroni constant $C$, i.e.
$$
\lim_{s\to1} \left(\zeta(s)-\frac{1}{s-1}\right) = C.
$$
More generally, any real positive-definite binary quadratic form  $Q(x,y)=ax^2 +bxy +cy^2$, defines a $\zeta$-function
$$
\zeta_Q(s) = \sum_{m,n\in\Z} \frac{1}{Q(m,n)^s} = \sum_{\substack{m,n\in\Z\\\text{not both }0}} \frac{y^s}{\abs{m+nz}^{2s}} \qquad \left(\begin{array}{rcl} z &= &x+iy\\ &= &\frac{1}{a}\left(b+\sqrt{-d}\right)\end{array}\right)
$$
for $\re(s)>1$. The first limit formula of Kronecker asserts that
$$
\lim_{s\to1}\left(\zeta_Q(s)-\frac{\pi}{s-1}\right) = 2\pi\left( C-\log2-\log\sqrt{y}\abs{\eta(z)}^2\right)
$$
where $\eta(z)$ is Dedekind's $\eta$-function
$$
\eta(z) = e^{\frac{\pi iz}{12}} \prod_{n\geq1} (1-e^{2\pi inz}).
$$
From Kronecker's first limit formula, one can show that for any $\gamma\in\SL_2(\Z)$, 
$$
\eta(\gamma z) = \eps(cz+d)^{1/2}\eta(z)
$$
with $\eps=\eps(a,b,c,d)$ and $\abs{\eps}=1$ (see for instance \cite{Sie}). Dedekind \cite{Ded} determined $\eps$ explicitly, by showing that
\begin{align}\label{Ded}
\log\eta(\gamma z) = i\pi\phi(\gamma) +\frac{1}{2}\log\left(\frac{cz+d}{i\ \sign(c)}\right) +\log\eta(z)
\end{align}
where $\log$ denotes the principal branch of the logarithm, $\sign(c)=\frac{c}{\abs{c}}$ and 
$$
\phi\bpm a&b\\ c&d\epm = \begin{dcases}
\frac{b}{d} & c=0\\
\frac{1}{12}\frac{a+d}{c} -\sign(c)s(a,\abs{c}) & c\neq0.
\end{dcases}
$$
The modular transformation (\ref{Ded}) of $\log\eta$ can be exploited to say things about Dedekind sums. A famous example is Dedekind's reciprocity law
\begin{align}\label{DRL}
s(a,c)+s(c,a) = \frac{1}{12}\left(\frac{a}{c}+\frac{1}{ac}+\frac{c}{a}\right) -\frac{1}{4}
\end{align}
for $a,c$ positive and coprime.

\section{Dedekind symbols}

Let $\Gamma\subset\SL_2(\R)$ be a lattice (i.e.~a discrete subgroup of finite covolume). Then $\Gamma$ acts on the (hyperbolic) upper half-plane $\h=\{x+iy:y>0\}$ properly discontinuously by fractional linear transformation, $\bsm a&b\\ c&d\esm.z = \frac{az+b}{cz+d}$, and this action extends to $\overline{\h}=\h\cup\partial\h=\h\cup\R\cup\{\infty\}$. For simplicity, we will first assume that $\Gamma$ has only one cusp (i.e.~the action of $\Gamma$ has a single fixed point in $\partial\h$) and that this cusp is the point at $\infty$. Let $\Gamma_\infty\subset\Gamma$ be the stabilizer subgroup of the cusp. Up to normalization, we may assume that $\Gamma_\infty=\bsm 1&\Z\\&1\esm$. The Eisenstein series
$$
E(z,s) = \sum_{\gamma\in\Gamma_\infty\backslash\Gamma} \im(\gamma z)^s = \sum_{\gamma\in\Gamma_\infty\backslash\Gamma} \frac{y^s}{|cz+d|^{2s}}
$$
is defined for $\re(s)>1$, and has a meromorphic continuation to the whole complex plane that is holomorphic in $\re(s)\geq1$ except for a simple pole at $s=1$ with residue $V^{-1}$, where $V$ is the covolume of $\Gamma$. 
Goldstein \cite{Go} derived (formally) the Kronecker first limit formula
$$
\lim_{s\to1}\left( E(z,s) - \frac{V^{-1}}{s-1}\right) = K(z)
$$
to obtain Dedekind sums for principal congruence subgroups. The existence of this limit can be deduced from bounds on the Fourier coefficients of the Eisenstein series, see \cite{JO}. We present a softer approach, using only that the Eisenstein series are eigenfunctions of the Laplacian on $\h$, expressed in local coordinates $z=x+iy$ by $\Delta=-y^2(\partial_{xx}+\partial_{yy})$. In fact, 
$$
\Delta E(z,s) = s(1-s)E(z,s)
$$
and note that $\Delta$ and the $\Gamma$-action commute, i.e. $\Delta f(\gamma z)=(\Delta f)(\gamma z)$, and $E(\gamma z,s)=E(z,s)$ for all $\gamma\in\Gamma$. We can observe that
\begin{enumerate}
\item $K(\gamma z)=K(z)$ for all $\gamma\in\Gamma$,
\item $K(z)$ is real-valued and real-analytic,
\item $\Delta K(z) = -V^{-1}$. 
\end{enumerate}
The last two facts imply that
\begin{align*}
H(z) = V K(z) +\ln y +c,
\end{align*}
for any fixed real constant $c$, is harmonic, i.e.~$\Delta H(z)=0$, and that it can be realized as the real part of a holomorphic function $F:\h\to\C$, i.e.~$\re(F(z))=H(z)$. Hence 
\begin{align}\label{klf}
\lim_{s\to1}\left( E(z,s) - \frac{V^{-1}}{s-1}\right) = V^{-1}\left(c - \ln y + \re F(z)\right).
\end{align}
(The constant $c$ can be made explicit by looking at the Fourier coefficients of the Eisenstein series.) 

To obtain Dedekind symbols, we study the automorphic transformation of the holomorphic function $F$. The Kronecker limit formula (\ref{klf}) and Fact (1) imply
\begin{align*}
 \re\left( F(z) -F(\gamma z)\right) = \ln\abs{cz+d}^2 = \re\left(\log(-(cz+d)^2)\right),
 \end{align*}
 where we fix $\log$ to be the principal branch of the logarithm. As a result, for any fixed group element $\gamma\in\Gamma$, the function
$$
\phi_\gamma(z) = F(z) - F(\gamma z) - \log(-(cz+d)^2),
$$
is holomorphic and takes values on the imaginary axis. Therefore, by the Open Mapping Theorem, it is constant, i.e.~$\phi(\gamma)\equiv\phi_\gamma(z)$. Since what we are really interested in is the variation in the argument, we replace $\phi$ by the normalization $\frac{\phi}{2\pi i}$, so that
\begin{align}\label{phi}
\phi(\gamma) = \frac{1}{2\pi i}\left( F(z)-F(\gamma z) -\log(-(cz+d)^2)\right).
\end{align} 
By analogy with Dedekind's formula (\ref{Ded}) for the transformation of $\log\eta$, we define for any $\bsm a&b\\c&d\esm\in\Gamma$ such that $c\neq0$,
$$
\mathcal{S}(a,b,c,d) = \frac{V}{4\pi} \frac{a+d}{c} - \phi\bpm a&b\\ c&d\epm.
$$
The right-hand side (RHS) of this equation factors through $\Gamma_\infty\backslash\Gamma\slash\Gamma_\infty$ \cite[Theorem 2]{me}. Observe that for $\gamma=\bsm a&b\\c&d\esm$ and $\gamma'=\bsm a&b'\\c&d'\esm$ in $\Gamma$,
$$
\gamma^{-1}\gamma' =  \bpm d&-b\\-c&a\epm \bpm a&b'\\ c&d'\epm = \bpm 1& *\\ 0&1\epm \in\Gamma_\infty
$$
meaning that any double coset representative $[\![\bsm a&b\\c&d\esm]\!] = \Gamma_\infty\bsm a&b\\c&d\esm\Gamma_\infty$ is completely determined by the column vector $(a,c)^T$. The same argument can be used to show that the double coset representative $[\![\bsm a&b\\c&d\esm]\!]$ can as well be completely determined by the row vector $(c, d)$. Hence, we have the Dedekind symbol
$$
\mathcal{S}_c(a,c) = \mathcal{S}_r(c,d) = \mathcal{S}\left[\!\left[\bpm a&b\\c&d\epm\right]\!\right] =
\frac{V}{4\pi} \frac{a+d}{c} - \phi\bpm a&b\\ c&d\epm
$$
for $\Gamma$. More generally, if $\Gamma\subset \SL_2(\R)$ has cusps, then each cusp $\frak{a}$ gives rise to a Dedekind symbol
\begin{align}\label{symbol}
\mathcal{S}_\frak{a} ([\![\gamma]\!]) = \frac{V}{4\pi} \frac{a_\gamma+d_\gamma}{c_\gamma} - \phi\bpm a_\gamma &b_\gamma\\ c_\gamma & d_\gamma\epm
\end{align}
for each non-trivial double coset $[\![\gamma]\!]$ in $\Gamma_\frak{a}\backslash\Gamma\slash\Gamma_\frak{a}$ and
$$
\bpm a_\gamma&b_\gamma\\ c_\gamma&d_\gamma\epm = \sigma_\frak{a}^{-1}\gamma\sigma_\frak{a},
$$ 
where $\sigma_\frak{a}\in\SL_2(\R)$ is a scaling, i.e.~$\sigma_\frak{a}(\infty)=\frak{a}$ and $\sigma_\frak{a}^{-1}\Gamma_\frak{a}\sigma_\frak{a}=\bsm 1&\Z\\&1\esm$. The definition (\ref{symbol}) does not depend on the particular choice of $\sigma_\frak{a}$.

\section{Equidistribution mod 1}

The perhaps {\em a priori} artificial definition (\ref{symbol}) of Dedekind symbols above is justified by the following theorem (which is the effective version of \cite[Theorem 4]{me} and can be compared to \cite{Va,Va2}.)

\begin{thm}
Let $\Gamma\subset\SL_2(\R)$ be a non-uniform lattice, with a cusp at $\frak{a}$ and associated Dedekind symbol $\mathcal{S}_\frak{a}$. The values of 
$$
\mathcal{S}_\frak{a}([\![\gamma]\!]) = \mathcal{S}_c(a_\gamma,c_\gamma),
$$
running over ordered admissible pairs $(a_\gamma,c_\gamma)$ as in (\ref{symbol}), become equidistributed mod 1, as $c_\gamma\to\infty$. More precisely, let $\mu_X$ be the normalized counting measure defined on the subset
$$
\mathcal{D}_X = \left\{ \mathcal{S}_c(a_\gamma,c_\gamma)\text{ mod } 1: 0\leq a_\gamma<c_\gamma\leq X\right\}
$$ 
of $\T=\R\slash\Z$, and let $\mu$ denote the Lebesgue measure on $\T$. Then for any $f\in C^\infty(\T)$,
$$
\mu_X(f) = \mu(f) + O\left(X^{-\frac23 +\eps}\right)
$$
as $X\to\infty$. The implied constant depends on a Sobolev norm of $f$ and $\Gamma$.
\end{thm}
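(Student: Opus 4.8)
The plan is to establish equidistribution mod $1$ via the Weyl criterion: it suffices to show that for every nonzero integer $h$, the exponential sum
$$
\sum_{0\leq a_\gamma<c_\gamma\leq X} e\!\left(h\,\mathcal{S}_\frak{a}([\![\gamma]\!])\right)
$$
is $o(\#\mathcal{D}_X)$, and to obtain the quantitative rate $O(X^{-2/3+\eps})$ we need this sum to be $O(X^{1-2/3+\eps})=O(X^{1/3+\eps})$ after dividing by the normalizing count $\#\mathcal{D}_X \asymp X$ (here I am using that the number of admissible pairs with $c_\gamma\leq X$ grows like a constant times $X$, which follows from the standard counting of $\Gamma_\frak{a}\backslash\Gamma\slash\Gamma_\frak{a}$, i.e.\ a lattice-point count in the $c$-aspect). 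First I would use the explicit shape \eqref{symbol} of $\mathcal{S}_\frak{a}$ to rewrite $e(h\,\mathcal{S}_\frak{a}([\![\gamma]\!]))$ in terms of the automorphic function $F$: since $\phi(\gamma)=\frac{1}{2\pi i}(F(z)-F(\gamma z)-\log(-(cz+d)^2))$ is constant in $z$, I am free to choose $z$ conveniently, and the natural choice is to push $z$ high into the cusp (take $z=iy$ with $y\to\infty$ along a subsequence, or more cleanly evaluate an average over a horocycle). This converts the exponential sum over double cosets into an integral of an automorphic kernel against an incomplete Eisenstein-type series, which is the standard mechanism linking Dedekind-sum equidistribution to spectral theory.

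Concretely, the key step is to recognize
$$
\sum_{0\leq a_\gamma<c_\gamma\leq X} e\!\left(h\,\mathcal{S}_\frak{a}([\![\gamma]\!])\right)
$$
as (essentially) a Kloosterman-type sum attached to $\Gamma$ at the cusp $\frak{a}$, twisted by the multiplier coming from $e(h\,\mathcal{S}_\frak{a})$; once the sum is in this form, one feeds it into the Bruggeman--Kuznetsov trace formula (or, more elementarily, into the spectral expansion of a smooth Poincar\'e series, following the Good/Jutila circle of ideas). The spectral side contributes the main term $\mu(f)=0$ for $h\neq0$ from the constant function together with a remainder governed by the size of the Laplacian eigenvalues $\lambda_j=1/4+t_j^2$ and by bounds on the Fourier coefficients of Maass cusp forms and of the Eisenstein series at $\frak{a}$; summing the geometric side with a smooth cutoff at scale $X$ and optimizing the length of the spectral expansion against the error in the smoothing yields the exponent $2/3$. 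The dependence of the implied constant on a Sobolev norm of $f$ enters exactly here: expanding $f=\sum_{h}\hat f(h)\,e(hx)$, the tail in $h$ is controlled by $\sum_h |\hat f(h)|$ times the $h$-uniform bound on the twisted Kloosterman sum, and $\sum_h|\hat f(h)|$ is dominated by a Sobolev norm $\|f\|_{H^s}$ for $s>1/2$.

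The analytic heart of the argument, and the step I expect to be the main obstacle, is obtaining the Kloosterman-sum estimate with the correct uniformity in the cutoff parameter $X$ and — crucially for the \emph{non-arithmetic} setting — without any access to Hecke operators or to arithmetic bounds on individual coefficients. One must therefore run the Kuznetsov formula in the ``geometric $\to$ spectral $\to$ geometric'' direction purely on the basis of the spectral gap and Weyl's law for $\Gamma$, extracting cancellation in the sum over $c_\gamma\leq X$ from the oscillation of the Bessel transforms rather than from algebraic identities among the sums themselves. Balancing the main term against the spectral error term — the former of size $\asymp X$, the latter built from the contribution of eigenvalues up to a truncation height $T$ together with an off-diagonal term of size roughly $X^{1/2}T^{1/2}$ plus a smoothing error of size $X/T$ — gives the optimization $T\asymp X^{1/3}$, hence the error $O(X^{2/3+\eps})$ before normalization and $O(X^{-1/3+\eps})$\dots wait, the stated bound is $O(X^{-2/3+\eps})$, so in fact the correct balance must exploit a stronger square-root cancellation in $c_\gamma$, giving geometric error $O(X^{1/2+\eps})$ after normalization; I would track the constants carefully at this stage, since it is precisely the interplay between the horocycle-average smoothing of $F$ near the cusp and the spectral truncation that produces the exponent $-2/3$.
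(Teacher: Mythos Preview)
Your overall strategy---Fourier expand $f$, reduce the $h$-th Weyl sum to a sum of twisted Kloosterman sums attached to $\Gamma$, and bound these spectrally---is exactly the route the paper takes. But there is a concrete error that derails your exponent bookkeeping, and a second place where you are making life much harder than necessary.

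\textbf{The counting error.} You assert that $\#\mathcal{D}_X \asymp X$. This is wrong: the set $\mathcal{D}_X$ indexes admissible pairs $(a_\gamma,c_\gamma)$ with $0\leq a_\gamma<c_\gamma\leq X$, and (as the paper recalls from \cite{me}) one has $\#\mathcal{D}_X \sim \tfrac{V}{4\pi}X^2$. With the correct normalization the arithmetic is clean: the $h$-th Weyl sum is precisely $\sum_{c_\gamma\leq X} S(h,h,c_\gamma,\phi)$, a sum of $\phi$-twisted Selberg--Kloosterman sums, and the Goldfeld--Sarnak bound $\sum_{c_\gamma\leq X}S(m,n,c_\gamma,\phi)\ll |mn|\,X^{4/3+\eps}$ (valid for general cofinite $\Gamma$, using only the spectral gap via \cite{GS} and \cite{Roe}) gives immediately
\[
\frac{1}{\#\mathcal{D}_X}\sum_{c_\gamma\leq X}S(h,h,c_\gamma,\phi)\ \ll\ h^2\,X^{-2/3+\eps}.
\]
There is no optimization in a truncation parameter $T$ to perform at this stage---the $4/3$ exponent is already the output of that optimization, carried out once and for all in \cite{GS}. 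Your confusion at the end (``$O(X^{-1/3+\eps})$\dots wait'') is entirely an artifact of the wrong count.

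\textbf{Unnecessary detours.} The business of choosing $z=iy$ with $y\to\infty$, or averaging $F$ over a horocycle, is not needed: by the definition \eqref{symbol} one has $e(h\,\mathcal{S}_\frak{a}([\![\gamma]\!]))=e\!\left(h\tfrac{V}{4\pi}\tfrac{a_\gamma+d_\gamma}{c_\gamma}\right)e(-h\phi(\gamma))$ directly, and summing over $a_\gamma \bmod c_\gamma$ \emph{is} the $\phi$-twisted Kloosterman sum. Likewise, you do not need to run Kuznetsov from scratch in the non-arithmetic setting; the Goldfeld--Sarnak estimate already does this and requires no Hecke theory. Finally, the paper controls the sum over $h$ by Cauchy--Schwarz against $\sum_h |a_h|^2|h|^8$, absorbing the $h^2$ growth; your $\sum_h|\hat f(h)|$ with $s>1/2$ would not suffice once the $|h|^2$ from Goldfeld--Sarnak is taken into account.
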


\begin{proof}
Any $f\in C^\infty(\T)$ admits a Fourier series 
$$
f(x) = \sum_{n\in\Z} a_n e(nx) := \sum_{n\in\Z} a_n e^{2\pi inx}
$$
that converges uniformly. Then
\begin{align*}
\mu_X(f) &= \frac{1}{\abs{\mathcal{D}_X}} \sum_{x\in \mathcal{D}_X}  f(x) \\
& =  \frac{1}{\abs{\mathcal{D}_X}}\sum_{n\in\Z} a_n\sum_{\substack{a_\gamma<c_\gamma\\c_\gamma\leq X}} e(n\mathcal{S}_c(a_\gamma,c_\gamma))\\
 &= \frac{1}{\abs{\mathcal{D}_X}} \sum_{n\in\Z}a_n \sum_{\substack{a_\gamma<c_\gamma\\c_\gamma\leq X}} e\left(n\frac{V}{4\pi} \frac{a+d}{c}\right) e\left(-n\phi(\gamma)\right)\\
& = \frac{1}{\abs{\mathcal{D}_X}}\sum_{n\in\Z} a_n  \sum_{c_\gamma\leq X}S(n,n,c_\gamma,\phi)
\end{align*}
is expressed in function of sums of $\phi$-twisted Selberg--Kloosterman sums \cite{Sel}. The constant term in this expansion is 
$$
\frac{a_0}{\abs{\mathcal{D}_X}}\sum_{c_\gamma\leq X} S(0,0,c_\gamma,\phi)= a_0 = \int_\T f(x) dx = \mu(f).
$$
Subconvex bounds on the sums of $\phi$-twisted Selberg--Kloosterman sums sums not only yield $\mu_X(f)\to \mu(f)$ as $X\to\infty$, they also control the rate of equidistribution, as we will now see. Thanks to the spectral theory of automorphic forms, subconvex bounds for (general) sums of Kloostermans sums are available, and in the specific case of the $\phi$-twisted Selberg--Kloosterman sums considered here, \cite[Theorem 2]{GS} together with \cite[Satz 5.5]{Roe}, yield, for every pair $m,n\in\Z$ such that $mn\neq0$,
$$
\sum_{c_\gamma\leq X} S(m,n,c_\gamma,\phi) \ll \abs{mn}X^{\frac{4}{3}+\eps},
$$
$\eps>0$, with an implied constant depending on $\Gamma$. 

In \cite{me}, we showed that 
$$
\abs{\mathcal{D}_X}\sim \frac{V}{4\pi}X^2
$$
as $X\to\infty$. Combining these results yields
\begin{align*}
\abs{\mu_X(f) - \mu(f)} \ll \sum_{n\neq0} \abs{a_n} \abs{n}^2 X^{-\frac23+\eps} \ll  \left( \sum_{n\neq0} \abs{a_n}^2 \abs{n}^8\right)^{\frac12} X^{-\frac23+\eps} \leq S(f) X^{-\frac23+\eps},
\end{align*}
where we used Cauchy--Schwarz in the second step, $S(f)$ is the adequate $L^2$-Sobolev norm, and the implied constant in the first step depends on $\Gamma$.
\end{proof}

\section{Reciprocity of Dedekind symbols}

We will now see that there is an analogue of the reciprocity law (\ref{DRL}) for Dedekind symbols. We extract from the construction of Dedekind symbols the formula
$$
F(z) - F(\gamma z) = \log(-(cz+d)^2) +2\pi i\phi(\gamma) = 2\log(cz+d) + 2\pi i\psi(\gamma)
$$
with
$$
\psi(\gamma) = \phi(\gamma)-\frac{1}{4}\sign(c),
$$
for which one can check directly that
\begin{align}\label{cocycle}
\psi(\gamma\tau)-\psi(\gamma)-\psi(\tau) = \frac{1}{2\pi i}\left( \log j(\gamma,\tau z) +\log j(\tau,z) - \log j(\gamma\tau,z)\right),
\end{align}
where $j(g,z)=cz+d$, $g=\bsm *&*\\ c&d\esm$.
The RHS is independent of $z$ and extends naturally to a function on $\SL_2(\R)\times\SL_2(\R)$. Moreover, it can be computed on explicit elements of $\SL_2(\R)$. In fact, in \cite[Theorem 1.2]{me2}, we show that,
 for any $g=\bsm a_g & b_g\\ c_g& d_g\esm$, $h=\bsm a_h& b_h\\ c_h &d_h\esm$ in $\SL_2(\R)$, and any $z\in\h$,
\begin{align*}
\omega & (g,h) := \frac{1}{2\pi i}\left( \log j(g,h z) +\log j(h,z) - \log j(gh,z)\right)\\
&= \frac14\left\{ \sign(c_g(-d_g))+\sign(c_h(-d_h))-\sign(c_{gh}(-d_{gh})) - \sign(c_g(-d_g)c_h(-d_h)c_{gh}(-d_{gh}))\right\}
\end{align*}
where
$$
c(-d) = \begin{dcases} c & c\neq0,\\ -d & c=0.\end{dcases}
$$
Let $\gamma, \tau\in\Gamma$ such that $[\![\gamma]\!]$, $[\![\tau]\!]$, $[\![\gamma\tau]\!]$ are non-trivial double cosets in $\Gamma_\frak{a}\backslash\Gamma\slash\Gamma_\frak{a}$. As before, write $\bsm a_\gamma & b_\gamma\\ c_\gamma & d_\gamma\esm=\sigma_\frak{a}^{-1}\gamma\sigma_\frak{a}$. Then
\begin{align*} 
\mathcal{S}_\frak{a}([\![\gamma]\!]) +\mathcal{S}_\frak{a}([\![\tau]\!]) -\mathcal{S}_\frak{a}([\![\gamma \tau]\!]) 
&= \frac{V}{4\pi}\left(\frac{c_{\gamma}}{c_{\gamma\tau}c_{\tau}}+\frac{c_{\tau}}{c_{\gamma}c_{\gamma\tau}} +\frac{c_{\gamma\tau}}{c_{\tau}c_{\gamma}}\right) \\
&\qquad - \phi\bpm a_\gamma & b_\gamma \\ c_\gamma & d_\gamma\epm  -\phi\bpm a_\tau & b_\tau \\ c_\tau & d_\tau\epm  + \phi\bpm a_{\gamma\tau} & b_{\gamma\tau} \\ c_{\gamma\tau} & d_{\gamma\tau}\epm \\
&= \frac{V}{4\pi}\left(\frac{c_{\gamma}}{c_{\gamma\tau}c_{\tau}}+\frac{c_{\tau}}{c_{\gamma}c_{\gamma\tau}} +\frac{c_{\gamma\tau}}{c_{\tau}c_{\gamma}}\right) \\
&\qquad + \omega(\gamma,\tau) +\frac14\left(\sign(c_{\gamma\tau})-\sign(c_\gamma)-\sign(c_\tau)\right)\\
&= \frac{V}{4\pi}\left(\frac{c_{\gamma}}{c_{\gamma\tau}c_{\tau}}+\frac{c_{\tau}}{c_{\gamma}c_{\gamma\tau}} +\frac{c_{\gamma\tau}}{c_{\tau}c_{\gamma}}\right) -\frac14\sign(c_\gamma c_\tau c_{\gamma\tau}),
\end{align*}
where the last step follows from \cite[Theorem 1.2]{me2} as cited above.

\begin{thm} 
Let $\Gamma\subset\SL_2(\R)$ be a non-uniform lattice with a cusp at $\infty$ and such that $\iota=\bsm &-1\\1&\esm\in\Gamma$. Then the reciprocity law
\begin{align*}
\mathcal{S}_\infty([\![\gamma]\!]) -\mathcal{S}_\infty([\![\gamma \iota]\!]) &= 
\mathcal{S}_r(c_\gamma,d_\gamma) -\mathcal{S}_r(d_\gamma, -c_\gamma) = \frac{V}{4\pi}\left(\frac{c_\gamma}{d_{\gamma}} + \frac{1}{c_\gamma d_{\gamma}} + \frac{d_{\gamma}}{c_\gamma}\right) - \frac14 \sign\left(c_\gamma d_{\gamma}\right)
\end{align*}
holds for any non-trivial double coset $[\![\gamma]\!]$ in $\Gamma_\infty\backslash\Gamma\slash\Gamma_\infty$ such that $[\![\gamma \iota]\!]$ is also non-trivial (i.e. $c_\gamma d_\gamma\neq0$).
\end{thm}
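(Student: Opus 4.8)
The plan is to obtain the reciprocity law as a special case of the three-term relation established just above, taking $\tau=\iota$. Since $\Gamma_\infty$ is already normalised to $\bsm 1&\Z\\&1\esm$, I would choose the scaling $\sigma_\infty$ in (\ref{symbol}) to be the identity — the definition does not depend on this choice — so that $\bsm a_\gamma&b_\gamma\\ c_\gamma&d_\gamma\esm=\gamma$. One matrix multiplication gives $\gamma\iota=\bsm b_\gamma&-a_\gamma\\ d_\gamma&-c_\gamma\esm$, whence $c_\iota=1$, $c_{\gamma\iota}=d_\gamma$, and $d_{\gamma\iota}=-c_\gamma$; in particular $\mathcal{S}_\infty([\![\gamma\iota]\!])=\mathcal{S}_r(c_{\gamma\iota},d_{\gamma\iota})=\mathcal{S}_r(d_\gamma,-c_\gamma)$, which matches the form of the statement. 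The standing hypothesis $c_\gamma d_\gamma\neq0$ is exactly the condition that $[\![\gamma]\!]$ and $[\![\gamma\iota]\!]$ be non-trivial, while $[\![\iota]\!]$ is non-trivial automatically ($c_\iota=1\neq0$), so the three-term relation applies. Substituting $c_\iota=1$ and $c_{\gamma\iota}=d_\gamma$ it becomes
$$
\mathcal{S}_\infty([\![\gamma]\!])+\mathcal{S}_\infty([\![\iota]\!])-\mathcal{S}_\infty([\![\gamma\iota]\!])=\frac{V}{4\pi}\left(\frac{c_\gamma}{d_\gamma}+\frac{1}{c_\gamma d_\gamma}+\frac{d_\gamma}{c_\gamma}\right)-\frac14\sign(c_\gamma d_\gamma),
$$
i.e.\ the desired identity plus the term $\mathcal{S}_\infty([\![\iota]\!])$. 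Everything therefore reduces to showing $\mathcal{S}_\infty([\![\iota]\!])=0$.

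For that I would note first that $a_\iota=d_\iota=0$, so the term $\frac{V}{4\pi}\frac{a_\iota+d_\iota}{c_\iota}$ in (\ref{symbol}) vanishes and $\mathcal{S}_\infty([\![\iota]\!])=-\phi(\iota)$; it is enough to prove $\phi(\iota)=0$. By construction $2\pi i\,\phi(\iota)=\phi_\iota(z)$, where $\phi_\iota(z)=F(z)-F(\iota z)-\log(-(c_\iota z+d_\iota)^2)=F(z)-F(-1/z)-\log(-z^2)$ is constant on $\h$. I would evaluate this constant at the fixed point $z=i$ of $\iota$: there $F(\iota i)=F(i)$, so the two $F$-terms cancel, and $-(c_\iota i+d_\iota)^2=-i^2=1$, so $\log(-z^2)=\log 1=0$. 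Hence $\phi(\iota)=0$, and feeding $\mathcal{S}_\infty([\![\iota]\!])=0$ into the displayed identity completes the proof.

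I do not anticipate a real obstacle here; the only step warranting a word of care is this last evaluation, which requires knowing that $\phi_\iota$ is genuinely holomorphic — hence constant — near $i$, i.e.\ that the principal branch of $\log(-z^2)$ is not crossing its cut there. That is immediate: $z\mapsto -z^2$ carries $\h$ into the slit plane $\C\setminus(-\infty,0]$, the domain of the principal logarithm, and sends $i$ to $1$. Alternatively $\phi(\iota)=0$ can be deduced from the cocycle relation (\ref{cocycle}) with $\gamma=\tau=\iota$ using $\iota^2=\pm I$, but the fixed-point evaluation is the shortest path.
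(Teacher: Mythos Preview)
Your argument is correct and follows the paper's overall strategy: specialize the three-term relation to $\tau=\iota$, then show $\mathcal{S}_\infty([\![\iota]\!])=0$. The difference lies in how you compute $\phi(\iota)$. The paper passes to $\psi(\iota)=\phi(\iota)-\tfrac14$ and evaluates it via the cocycle identity (\ref{cocycle}) iterated on $\iota$, $-I$, and $I$, reading the values of $\omega$ from the explicit formula quoted from \cite{me2}; this yields $\psi(\iota)=-\tfrac14$, hence $\phi(\iota)=0$. You instead evaluate the constant $\phi_\iota(z)$ directly at the elliptic fixed point $z=i$, where both the $F$-terms and the logarithm visibly vanish. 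Your route is shorter and entirely self-contained---it needs neither the explicit cocycle formula for $\omega$ nor the auxiliary function $\psi$---and your check that $-z^2$ stays in the cut plane for $z\in\h$ disposes of the only subtlety. The paper's approach, by contrast, stays within the cohomological framework it is developing and would apply uniformly to elements without a convenient fixed point in $\h$; but for $\iota$ specifically your fixed-point evaluation is the cleaner computation.
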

\begin{proof}
If we choose $\tau=\iota$ in the general formula above, then
$$
\mathcal{S}_\infty([\![\gamma]\!]) + \mathcal{S}_\infty([\![\iota]\!]) - \mathcal{S}_\infty([\![\gamma\iota]\!]) = \frac{V}{4\pi}\left(\frac{c_\gamma}{d_{\gamma}} + \frac{1}{c_\gamma d_{\gamma}} + \frac{d_{\gamma}}{c_\gamma}\right) - \frac14 \sign\left(c_\gamma d_{\gamma}\right).
$$
By definition, we have 
$$
\mathcal{S}_\infty([\![\iota]\!]) = \mathcal{S}_r(1,0) = - \phi(\iota) = -\psi(\iota) -\frac{1}{4}
$$
and, applying (\ref{cocycle}) repeatedly, 
$$
- 2\psi(\iota) =  \omega(\iota,\iota) -  \psi(-I) = \omega(\iota,\iota) + \frac12\omega(-I,-I)  - \frac12\psi(I) = \omega(\iota,\iota) + \frac12\omega(-I,-I)  +\frac12\omega(I,I).
$$
We can then read from \cite[Theorem 2.1]{me2} that $\psi(\iota)=-\frac14$.
\end{proof}

The reciprocity law, applied together with the periodicity relation $\mathcal{S}_r(c,d)=\mathcal{S}_r(c,d')$ for all $d\equiv d'$ mod $c$ terminating when $\mathcal{S}_r(1,0)=0$, goes to show that such Dedekind symbols are rational in the matrix coefficients of $\Gamma$. In the case of a Hecke triangle group $H_q$, which is generated by $\iota$ and $\tau_q=\bsm 1&\lambda_q\\&1\esm$, $\lambda_q=2\cos\frac{\pi}{q}$, the reciprocity law implies that up to a constant scaling by $\lambda_q$, the Dedekind symbol is rational. Alternatively, a simple cohomological argument leads to the same conclusion. Observe that the RHS of (\ref{cocycle}) defines an integral 2-cocycle on $\Gamma$. The rational class obtained under the natural map $H^2(\Gamma,\Z)\to H^2(\Gamma,\Q)$ is necessarily trivial, since $H^2(\Gamma,\Q)=0$. This means that there exists a function $f:\Gamma\to\Q$ such that $f(\gamma\tau)-f(\gamma)-f(\tau)=\psi(\gamma)-\psi(\gamma)-\psi(\tau)$. In the case $\Gamma=H_q$, the fact that $H_q=C_2\ast C_q$ implies that $H^1(H_q,\Q)=0$, and hence that the map $f$ is unique, i.e.~$f=\psi$.

\end{document}